\documentclass[11pt, a4paper, twoside, regno]{amsart}
\setlength{\textwidth}{15cm}
\setlength{\textheight}{23cm}
\calclayout
\usepackage[T1]{fontenc}
\usepackage{color}
\usepackage[english]{babel}
\usepackage[mathscr]{euscript}
\usepackage{MnSymbol}
\usepackage[mathscr]{euscript}
\usepackage{amsthm}
\usepackage{thmtools}
\usepackage{stmaryrd}
\usepackage[utf8]{inputenc}
\usepackage[all]{xy}
\usepackage{indentfirst}
\usepackage{hyperref}
\usepackage{amsmath}
\usepackage{amsmath,amsfonts}
\usepackage{lipsum}
\usepackage[toc,page]{appendix}
\usepackage[pdftex]{graphicx}
\usepackage{tikz-cd}
\usepackage{amsthm}
\usepackage{ragged2e}
\usepackage{blindtext}
\usepackage{url}
\usepackage{booktabs} 
\usepackage{array} 
\usepackage{paralist} 
\usepackage{verbatim} 
\usepackage{subfig}
\usepackage{graphicx}			
\input xy
\xyoption{all}

\newtheorem{theorem}[subsection]{Theorem}
\newtheorem{Theorem}[theorem]{Theorem}

\newtheorem{Proposition}[theorem]{Proposition}
\newtheorem{Definition}[theorem]{Definition}
\newtheorem{Remark}[theorem]{Remark}

\newtheorem{Example}[theorem]{Example}


\DeclareMathOperator{\rank}{rank}

\DeclareMathOperator{\Fitt}{Fitt}

\DeclareMathOperator{\Ann}{Ann}

\DeclareMathOperator{\coker}{Coker}

\DeclareMathAlphabet{\mathpzc}{OT1}{pzc}{m}{it}



\title{The dual F-Signature of Veronese Rings}
\author[Vinicius Bou\c{c}a]{Vinicius Bou\c{c}a}
\address{$^{1}$  Instituto de Matem\'{a}tica,
	Universidade Federal do Rio de Janeiro,  Brazil. }
\email{vbouca@im.ufrj.br }
\author[Eliana Tolosa Villarreal]{Eliana Tolosa Villarreal}
\address{$^{2}$  Instituto de Matem\'{a}tica,
	Univertistà degli studi di Genova, Italy }
\email{etolosav@gmail.com}
\author[Kevin Vasconcellos]{Kevin Vasconcellos}
\address{$^{3}$  Instituto de Matem\'{a}tica,
	Universidade Federal do Rio de Janeiro,  Brazil. }
\email{kevin.vasconcellos@im.ufrj.br}
\date{}
\begin{document}
\maketitle
\begin{abstract}
In this paper we adress the question of I. Smirnov and K. Tucker on the dual F-signature of the Veronese subrings of polynomial rings in $n$ variables using methods of commutative algebra.
\end{abstract}
\thispagestyle{empty}
\section{Introduction}
Let $R$ be a complete $d$-dimensional reduced Noetherian local ring with prime characteristic $p > 0$ and perfect residue field $K = K^p$. For $e \in \mathbb{N}$, we can naturally identify the inclusion $R \subseteq R^{1/p^e}$ into the $p^e$-th roots of elements of $R$, with the $e$-th iterate of the Frobenius endomorphism. The behaviour of such endomorphism characterize some singularities, called the $F$-singularities. Among the classes of $F$-singularities, three hold primordial significance: $F$-purity, strong $F$-regularity and $F$-rationality. To investigate and classify $F$-singularities, several numerical invariants have been introduced since the past four decades.

Let us decompose $R^{1/p^e}$ as the direct sum of free $R$-modules and a non-free summand $M_e$ and
let $a_e$ denote the largest rank of the free $R$-module appearing in the decomposition. That is,
\begin{equation*}
    R^{1/p^e} = R^{\bigoplus a_e} \bigoplus M_e.
\end{equation*}
The number $a_e$ is called the \textit{e-th Frobenius splitting number} and it gives information on how the Frobenius endomorphism acts on $R$. In order to study the asymptotic behaviour of $a_e$, it was defined the 
$F$\textit{-signature} as the following limit
\begin{equation*}
    s(R) = \lim_{e \xrightarrow{}\infty} \frac{a_e}{p^{ed}}.
\end{equation*}

The $F$-signature was first implicitly mentioned in the work of K. Smith and M. Van der Bergh \cite{smith2002simplicity} in 1997 and then it was formally introduced and studied by C. Huneke and G. Leuschke \cite{huneke2002two} in 2002. Nonetheless, the existence of the limit was not clear until $2012$, when K. Tucker prove it in general \cite{Tucker_2012}.

This invariant carries interesting information about the singularities of $R$. In fact, if $R$ is a regular ring, $R^{1/p^e}$ is a free $R$-module of rank $p^{ed}$, meaning that the $F$-signature somehow measures how far is the ring $R$ to be regular. C. Huneke and G. Leuschke \cite{huneke2002two} proved that $s(R)\geq 1$ with equality if and only if $R$ is regular. Furthermore, I. Aberbach and G. Leuschke \cite{aberbach2002f} showed that $s(R)>0$ if and only if $R$ is strongly $F$-regular.

To study the relationship between the $F$- signature and $F$-rationality, A. Sannai \cite{sannai2015dual} expanded the definition of $F$-signature to encompass modules, introducing the \textit{dual $F$-signature} and defined as follows

$$s_{dual}(M) = \lim \sup_{q\to \infty}\frac{\max \{ N \hspace{2pt} | \hspace{2pt} \text{ there is a sujection } F_{*}^e \omega_R \twoheadrightarrow \omega_R^N\} }{\rank F_{*}^e \omega_R},$$
where $R$ is assumed Cohen-Macaulay and $\omega_R$ is its canonical module.

Sannai established that, for $F$-finite reduced Cohen-Macaulay local rings with characteristic $p>0$ and admitting canonical module $\omega_R$, the condition of $F$-rationality is unequivocally defined: $R$ is $F$-rational if and only if the dual $F$-signature of its canonical module is positive. 

However, calculating the $F$-signature and the dual $F$-signature is not trivial. The question is still open even for well studied rings.

Our work focuses on calculating the dual $F$-signature of the $d$-th Veronese subring $S^{(d)}$ of the polynomial ring $S = k[x_1,\dots,x_n]$, validating the suspicion presented by Smirnov and Tucker in  \cite{smirnov2023theory}. We state our main result next.

\begin{Theorem}
Let $k$ be a perfect field of prime characteristic $p > 0$ and $S^{(d)}$ the $d$-Veronese subring of $k[x_1,\dots,x_n]$. Then, the dual $F$-signature of $S^{(d)}$ is
\begin{gather*}
s_{dual}\left(S^{(d)}\right) = \frac{1}{d}\bigg\lceil\frac{d}{n}\bigg\rceil.
\end{gather*}       
\end{Theorem}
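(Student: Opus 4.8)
The plan is to turn the problem into the combinatorics of the rank-one divisorial modules of $R := S^{(d)}$ and then solve an explicit optimisation. For $0 \le i \le d-1$ set $S_i^{(d)} := \bigoplus_{m\ge 0} S_{md+i}$, the $i$-th strand; these are rank-one maximal Cohen--Macaulay $R$-modules realising $\CL(R)\cong\mathbb Z/d$ when $n\ge 2$, with $S_0^{(d)}=R$. First I would record, from $\omega_S=S(-n)$ and the Veronese rule for canonical modules, that $\omega_R\cong S_c^{(d)}$ for the appropriate residue $c\in\{0,\dots,d-1\}$, and that $\rank_R F^e_*\omega_R=p^{en}$.

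The structural heart of the argument is a decomposition of $F^e_*\omega_R$. Since $R=k[x_1,\dots,x_n]^{\mu_d}$ with $\mu_d$ acting diagonally, $F^e_*S\cong S^{1/q}$ (where $q=p^e$) is a free $S$-module whose monomial summands $x^{a/q}S$, $a\in\{0,\dots,q-1\}^n$, are $\mu_d$-stable and isomorphic to character twists of $S$; taking $\mu_d$-invariants yields $F^e_*\omega_R\cong\bigoplus_{i=0}^{d-1}(S_i^{(d)})^{\oplus r_{e,i}}$, where $r_{e,i}$ counts the $a$ with $|a|$ in a prescribed class modulo $d$. A short roots-of-unity computation gives $r_{e,i}=p^{en}/d+O(1)$ for every $i$. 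Hence, up to $O(1)$ changes in the exponents, $c_e:=\max\{N: F^e_*\omega_R\twoheadrightarrow\omega_R^{\oplus N}\}$ equals $\Phi(r):=\max\{N:\bigoplus_{i}(S_i^{(d)})^{\oplus r}\twoheadrightarrow (S_c^{(d)})^{\oplus N}\}$ with $r=p^{en}/d$, and the theorem reduces to determining $\lim_{r\to\infty}\Phi(r)/r$.

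For the upper bound on $\Phi(r)$ I would use Nakayama: such a surjection must be onto modulo $\fm$, and the graded structure pins down exactly which strands $S_i^{(d)}$ carry a map to $S_c^{(d)}$ meeting a minimal generator (equivalently, for which $i$ the module $\Hom_R(S_i^{(d)},S_c^{(d)})$ has an element not landing in $\fm S_c^{(d)}$); comparing $\dim_k (S_c^{(d)})^{\oplus N}/\fm(\cdots)$ with $\sum_i r\cdot\dim_k S_i^{(d)}/\fm S_i^{(d)}$ over the admissible $i$ then bounds $N$. Alternatively one can dualise: $\Hom_R(-,\omega_R)$ is exact on maximal Cohen--Macaulay modules, $\omega_R^\vee=R$, and Grothendieck duality for Frobenius gives $(F^e_*\omega_R)^\vee\cong F^e_*R$, so $c_e=\max\{N: R^{\oplus N}\hookrightarrow F^e_*R$ with maximal Cohen--Macaulay cokernel$\}$. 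For the lower bound I would build explicit surjections from the multiplication maps $S_a^{(d)}\otimes_R S_1^{(d)}\twoheadrightarrow S_{a+1}^{(d)}$ (surjective while $a+1\le d-1$, since $S_aS_1=S_{a+1}$ attains the bottom degree) composed suitably, together with the identity on the $S_c^{(d)}$-summand, and then optimise the allocation of the $r$ copies of each strand among such ``recipes.'' The main obstacle is proving that these two estimates meet: one must show that sharing a source summand among several target copies cannot beat the best disjoint-recipe allocation, i.e. that the associated linear-programming bound is attained, which is precisely where the ceiling in $\tfrac1d\lceil d/n\rceil$ is forced. A secondary point is the passage between the graded ring and its completion, which underlies the computation of $s_{dual}$ from graded data and should be dealt with at the outset.
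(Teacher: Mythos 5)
Your plan follows the same route as the paper: decompose $F^e_*\omega_{S^{(d)}}$ into the strands $S_{[0]},\dots,S_{[d-1]}$ with multiplicities $p^{en}/d+O(1)$, bound the surjective number above by a Nakayama/generator count over the strands that can actually hit the minimal generators of $\omega_{S^{(d)}}$, and bound it below by explicit surjections between strands. The genuine gap is in your lower bound. The recipes you propose come from the multiplication maps $S_{[a]}\otimes_R S_{[1]}\twoheadrightarrow S_{[a+1]}$, which, after choosing the $n$ generators of $S_{[1]}$, only give surjections $S_{[a]}^{\oplus n}\twoheadrightarrow S_{[a+1]}$; iterating costs a factor $1/n$ per step, so a copy of $S_{[i]}$ yields only $n^{-(k-i)}$ copies of $\omega_{S^{(d)}}=S_{[k]}$, and the resulting total $\tfrac1d\sum_{j=0}^{k}n^{-j}$ falls strictly short of the Nakayama upper bound $\tfrac{n+k}{nd}=\tfrac{1}{d}\sum_{i=0}^{k}\binom{n+i-1}{n-1}\big/\binom{n+k-1}{n-1}$ as soon as $k\ge 2$ (already for $n=2$, $d\ge 4$). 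The real content of the paper's argument is exactly the construction you do not supply: block surjections in which several source copies jointly cover several target copies with generator counts balanced, namely the banded matrix $\Psi$ giving $S_{[i]}^{\oplus(n+i)}\twoheadrightarrow S_{[i+1]}^{\oplus(i+1)}$, whose surjectivity in degrees $\ge i+1$ is proved by showing $I_{i+1}(\Psi)=(x_1,\dots,x_n)^{i+1}$; composing these gives $f_i/e_i=\binom{n+i-1}{n-1}\big/\binom{n+k-1}{n-1}$, matching the generator count. Consequently your stated ``main obstacle'' sits on the wrong side: the upper bound needs no disjointness statement (counting generators modulo $\fm$ already handles arbitrary maps, shared summands included), while sharing a source summand among several targets is not something to rule out but precisely the device the lower bound requires.

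A second point you should not gloss over: if you actually solve the optimisation your plan sets up, the two bounds meet at $\tfrac{n+k}{nd}=\tfrac1n\lceil n/d\rceil$, where $k\equiv -n \pmod d$ is the strand of the canonical module; for $n=2$, $d=3$ this is $1/2$, whereas the displayed formula $\tfrac1d\lceil d/n\rceil$ gives $2/3$. This is the value the paper's two propositions in fact establish (and their displayed theorem differs from it), so endorsing the stated formula as ``where the ceiling is forced'' without carrying out the computation is itself a gap in the write-up.
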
\vspace{0.1cm}

The paper is organized as follows. In section 2 we recall some basic definitions about the dual $F$-signature of a module and stablish the notation used throughout the paper. In section 3, we explicitly give a decomposition of the module of $p^e$-roots canonical module of the Veronese rings $S^(d)$ as a direct sum of $S^(d)$-modules. IN section 4 we pause the discussion on the dual F-signature to prove an auxiliary result that is used  
Finally, in section 5, we sate again our main question and prove it by bounding above the F-signature by counting generators and bounding it below by exhibiting explicit maps between 

\section{Preliminaries}
Throughout what follows, $(R,\mathfrak{m},K)$ is a $d$-dimensional reduced Noetherian ring of prime characteristic $p>0$. We use the symbol $q$ to represent a varying power of $p$ in our notation. We set $\alpha(R) = \log_p[K\::\:K^p]$ and assume that $R$ is $F$-finite, which means that the Frobenius endomorphism is finite. Equivalently, considering $R^{1/q} = \{r^{1/q}\:\:;\:\:r \in R\}$ the ring of $q$-th roots of elements of $R$, $R$ is $F$-finite if $R^{1/q}$ is a finite $R$-module, which implies that $\alpha(R) < \infty$. In the following we present the definition of the $F$-signature of $R$.
\begin{Definition}
Let $(R,\mathfrak{m},k)$ be a ring as above. For each $q = p^e$, decompose $R^{1/q}$ as a direct sum of finite $R$-modules $R^{a_q}\oplus M_q$, where $M_q$ does not contain non-zero free direct summands. The $F$-signature of $R$ is
\begin{gather*}
s(R) = \lim_{q\to \infty}\frac{a_q}{q^{d + \alpha(R)}}.
\end{gather*}
\end{Definition}
\noindent For any positive integer $e$, we define the ring endomorphism $F^e$ through the composition of the Frobenius endomorphism applied $e$ times. Consequently, for an $R$-module $M$, this endomorphism induces on $M$ a new $R$-module structure on $M$, denoted as $F_{*}^eM$. Sannai \cite{sannai2015dual} extended the concept of $F$-signature for $R$-modules, introducing what he referred to as the dual $F$-signature.
\begin{Definition}
Let $(R,\mathfrak{m},k)$ be a ring as above and $M$ an $R$-module. For each $q$, let $b_q$ be the $F$-surjective number of $M$ defined by $$b_q = \max\{n \in \mathbb{N}\:\:;\:\:\exists\:F_{*}^e:\:M\longrightarrow M^n\:\:surjective\}.$$ We define the dual $F$-signature of $M$ by
\begin{gather*}
s_{dual}(M) = \lim \sup_{q\to \infty}\frac{b_q}{q^{d + \alpha(R)}}.
\end{gather*}
\end{Definition}
Let $K$ be a perfect field with a prime characteristic $p>0$. Consider $S = K[x_1, \dots, x_n]$ the polynomial ring over $K$ with $n$ indeterminates and equipped with the standard grading. Let's denote by $S_i$ the $i$-th homogeneous component of the polynomial ring $S$. This component is spanned over $K$ by all monomials of $S$ that possess a degree $i$. This leads us the following direct sum decomposition 
\begin{gather*}
S = \bigoplus_{i=0}^\infty S_i.
\end{gather*}
The $d$-th Veronese ring of $S$, commonly denoted as $S^{(d)}$, is the graded subring generated over $K$ by all monomials of degree $d$, that is
\begin{gather*}
S^{(d)} = \bigoplus\limits_{i=0}^\infty S_{id}.
\end{gather*}
Observe that $S$ can be regarded as a finite module over $S^{(d)}$. With this structure, $S$ decomposes into a direct sum of $S_d$-modules
\begin{gather*}
S = \bigoplus\limits_{j = 0}^{d-1} S_{[j]},
\end{gather*}
where
\begin{gather*}
S_{[j]} = \bigoplus\limits_{i=0}^\infty S_{j + id}
\end{gather*}
for $j=0,1,\dots,d-1$. Notice that we can think of $S_{[j]}$ as the polynomials with degree $j$ modulo $d$.
Lastly, the superscript $^{1/p^e}$ shall symbolize the ring (or module) resulting from taking $p$-th roots. 
\begin{Remark}
    Note that, if $S = K[x_1,\dots,x_n]$ is the polynomial ring over $K$ with $n$ indeterminates over a perfect field of prime characteristic $p>0$, then its $d$-Veronese subring $S^{(d)}$ is a reduced $F$-finite ring. 
\end{Remark}
\section{The Structure of the Canonical Module of Veronese Rings}
		
In this section we will explain the structure of the canonical module of the Veronese ring $S^{(d)}$. Recall that the canonical module of the polynomial ring $S$ is $S(-n)$. Hence the canonical module $\omega_{S^{(d)}}$ is given by
		$$\omega_{S^{(d)}}  = (\omega_S)^{(d)} = \bigoplus \limits_{i = 0}^{\infty} S_{-n+id} = S_{[k]},$$ where $k$ is the remainder of $n$ when divided by $d$.
\begin{Proposition}  
Let $S = K[x_1, \dots, x_n]$ be a standard graded polynomial ring over a perfect field $K$ of characteristic $p>0$ a non-negative integer $e$ and $d = p^eq$ a positive integer with gcd$(p,q) = 1$. Then the $p^e$-th root of canonical module $(\omega_{S^{(d)}})^{\frac{1}{p^e}}$ of the $d$-Veronese subring decomposes as a direct sum
$$(\omega_{S^{(d)}})^{\frac{1}{p^e}} = S_{[0]}^{\oplus n_0} \oplus\cdots\oplus S_{[d-1]} ^{\oplus n_{d-1}},$$ with $n_i = \frac{p^{ne}- k_e}{d}$ or $\frac{p^{ne}- k_e}{d} +1$ and $k_e$ is the remainder of $p^{ne}$ when divided by $d$.

\end{Proposition}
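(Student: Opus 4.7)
The plan is to work inside $S^{1/p^e}=K[y_1,\dots,y_n]$ with $y_i=x_i^{1/p^e}$: first identify $(\omega_{S^{(d)}})^{1/p^e}$ as an explicit $K$-subspace cut out by a degree congruence, then read off its $S^{(d)}$-summands from a natural decomposition of the ambient module.

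Using the identification $\omega_{S^{(d)}}=S_{[k]}$ recalled just before the statement together with the Frobenius identity $(y^a)^{p^e}=x^a$, I would first observe that $(\omega_{S^{(d)}})^{1/p^e}=S_{[k]}^{1/p^e}$ is the $K$-span of monomials $y^a$ with $|a|\equiv k\pmod d$. Then I would decompose the ambient $S^{1/p^e}$ by writing it first as the free $S$-module $\bigoplus_{\tilde a\in\{0,\dots,p^e-1\}^n}S\cdot y^{\tilde a}$ and applying the decomposition $S=\bigoplus_{j=0}^{d-1}S_{[j]}$ of Section~2. Because multiplication by $y^{\tilde a}$ is $S^{(d)}$-linear, each summand $S_{[j]}\cdot y^{\tilde a}$ is $S^{(d)}$-isomorphic to $S_{[j]}$.

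The key compatibility is that an entire summand $S_{[j]}\cdot y^{\tilde a}$ lies in $S_{[k]}^{1/p^e}$ exactly when $p^ej+|\tilde a|\equiv k\pmod d$, and contributes nothing when this congruence fails: indeed its monomials have $y$-degrees $p^e(j+md)+|\tilde a|$ as $m$ varies, and distinct monomials $y^{p^eb+\tilde a}$ are $K$-linearly independent in $S^{1/p^e}$. Combining the two decompositions and retaining only the valid summands yields
\[
(\omega_{S^{(d)}})^{1/p^e}=\bigoplus_{j=0}^{d-1}S_{[j]}^{\oplus n_j},\qquad n_j=\#\bigl\{\tilde a\in\{0,\dots,p^e-1\}^n\;:\;|\tilde a|\equiv k-p^ej\pmod d\bigr\}.
\]

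The main obstacle is the final numerical claim that each $n_j$ equals $(p^{ne}-k_e)/d$ or $(p^{ne}-k_e)/d+1$. The natural tool is the roots-of-unity filter applied to $P(x)=\bigl(\tfrac{1-x^{p^e}}{1-x}\bigr)^n$ reduced modulo $x^d-1$: the coprimality $d=p^eq$ with $\gcd(p^e,q)=1$ forces $P(\zeta)=0$ for every $\zeta\in\mu_{p^e}\setminus\{1\}$, so the relevant character sum collapses onto contributions from $q$-th roots of unity. The rank identity $\sum_j n_j=p^{ne}$ then pins down exactly $k_e$ indices as realising the larger value, and bounding the remaining character sum by $1$ in absolute value finishes the estimate. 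This combinatorial estimation is where I expect the hardest work to lie.
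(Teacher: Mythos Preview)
Your structural decomposition is correct and reaches the same conclusion as the paper's, though by a more direct route. The paper writes each exponent $c_i=a_ip^e+b_i$ with $0\le b_i<p^e$ just as you do, but then splits the congruence $\sum c_i\equiv k\pmod d$ via the Chinese Remainder Theorem into conditions mod $p^f$ and mod $q$ (where $d=p^fq$), passing through an auxiliary refinement $S_{[j,l]}$ before reassembling into the modules $S_{[j]}$. Your argument bypasses this: you directly use $S^{1/p^e}=\bigoplus_{\tilde a}S\cdot y^{\tilde a}$ together with $S=\bigoplus_j S_{[j]}$, which is cleaner and yields the same formula $n_j=\#\{\tilde a\in\{0,\dots,p^e-1\}^n:|\tilde a|\equiv k-p^ej\pmod d\}$.

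The genuine gap is in the numerical step, and it is not fixable because the sharp claim is false. For $n=3$, $p=2$, $e=1$, $d=7$ (so $k=3$) one computes $n_1=3$ and $n_2=0$, whereas $\lfloor p^{ne}/d\rfloor=\lfloor 8/7\rfloor=1$; so the $n_j$ do not all lie in $\{1,2\}$. Consequently your proposed bound ``the remaining character sum is at most $1$ in absolute value'' cannot hold---indeed, for $\zeta$ a primitive $d$-th root close to $1$ the factor $\bigl|(1-\zeta^{p^e})/(1-\zeta)\bigr|$ is of order $d$, not $1$. (Also, the surviving terms are indexed by $\mu_d\setminus\mu_{p^f}$, not just by $\mu_q$, when $p\mid d$.) The paper's proof is no better at this point: it simply asserts the two values after dividing $p^{ne}$ by $d$. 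What your character-sum argument \emph{does} give is $|n_j-p^{ne}/d|\le C$ for a constant $C$ depending only on $n$ and $d$, since each surviving term contributes at most $\bigl(2/|1-\zeta|\bigr)^n$ independently of $e$. That is enough for $\lim_{e\to\infty}n_j/p^{ne}=1/d$, which is the only consequence the paper actually uses downstream.
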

\noindent\textit{Proof:}  Since $\omega_{S^{(d)}}$ is given by
$$\omega_{S^{(d)}}  = (\omega_S)^{(d)} = \bigoplus \limits_{i = 0}^{\infty} S_{-n+id} = S_{[k]},$$ where $k$ is the remainder of $n$ when divided by $d$, one has that $$(\omega_{S^{(d)}})^{\frac{1}{p^e}} = (S_{[k]})^{\frac{1}{p^e}} = \bigoplus\limits_{\sum c_i \equiv_d k}   K\cdot x_1^{\frac{c_1}{p^e}}\dots x_n^{\frac{c_n}{p^e}}.$$
\noindent  Now observe that each of the $c_i$ can be written uniquely as a sum $a_ip^f + b_i$ with $0\leq b_i < p^f$. Hence $$\bigg(\sum\limits_{i=1}^n a_i\bigg)p^e + \sum \limits_{i=1}^n b_i  = \sum \limits_{i=1}^n c_i \equiv_d k.$$
Since we are interested in the asymptotic behavior of $p^e$ we can suppose that $e > f.$ Therefore by the Chinese Remainder Theorem, one has

\begin{equation}
\left( \sum a_i\right) p^e + \sum b_i \equiv_{p^f} k
    \quad \quad\text{and}\quad \quad
\left( \sum a_i \right) p^e + \sum b_i \equiv_q k 
\end{equation}
 which is
\begin{equation}
\sum b_i \equiv_{p^f} k
    \quad \quad\text{and}\quad \quad
\left(\sum a_i \right) p^e + \sum b_i \equiv_q k.
\label{cong}
\end{equation}

\noindent Let $$S_{[k,l]} = \bigoplus_{g \in G} S_g,$$ where $G$ is the set of all elements $g$ such that $g \equiv_{p^e} k$ and $g \equiv_{q} l$. Notice that this is a refinement of $S_{[h]}$ in the sense that $$S_{[h]} \cong \bigoplus \limits_{j=0}^{p^e-1} S_{[j,h]}$$ for the piece of degree congruent to $h$ module $q$. \\[0.3cm]
\noindent Let $\widetilde{\sum a_i}$ and $\widetilde{\sum b_i}$ be the congruence classes of $\sum a_i$ and $\sum b_i$ in $\mathbb{Z}/{p^e}\mathbb{Z}$ respectively; and let $\overline{\sum a_i}$ and $\overline{\sum b_i}$ be the congruence classes in $\mathbb{Z}/q\mathbb{Z}$ respectively. By \autoref{cong}, $\widetilde{\sum b_i}$ is fixed and there is no constraint on  $\widetilde{\sum a_i}$. Then, we only have to consider how $\overline{\sum a_i}$ and $\overline{\sum b_i}$ change.\\[0.3cm]
\noindent Let us fix $\overline{\sum b_i}$ and $(b_1, \dots, b_n) \in \{0, \dots, p^e-1\}^n$. Then, we have a unique $\overline{\sum a_i}$, say $\sum a_i \equiv_q k_b$, that can be reached with different vector values $(a_1, \dots, a_n)$.\\[0.3cm]
\noindent Consider the direct sum of all the $K$-modules such that $\sum a_i \equiv_q k_b$,
$$ \bigoplus \limits_{\sum a_i \equiv_q k_b} K \cdot x_1^{\frac{a_np^e + b_n}{p^e}} \dots x_n^{\frac{a_1p^e + b_1}{p^e}} \cong \bigoplus \limits_{j=0}^{p^e-1} S_{[j,k_b]} \cong S_{[k_b]}.$$
We have this direct sum for each $(b_1, \dots, b_n) \in \{0, \dots, p^e-1\}^n$, then we have to count how many vectors $(b_0, \dots, b_n)$ we have such that $\sum b_i \equiv_d \alpha$ to see how many copies of $S_{[k_b]}$ we obtain:

\noindent There are $(p^e)^n$ vectors $(b_1, \dots, b_n)$. We have to divide this total in the amount of congruence classes, that is $\frac
{p^{ne}}{d}$. But as it has to be an integer we obtain 

\begin{equation}
\frac{p^{ne}- k_e}{d}
    \quad \quad\text{or}\quad \quad
\frac{p^{ne}- k_e}{d} +1,
\end{equation}
where $k_e$ is the reminder of $p^{ne}$ divided by $d$.

\noindent Finally we obtain
$$ \bigoplus \limits_{0 \leq k_b \leq q} \left[  \bigoplus \limits_{\sum a_i \equiv_q k_b} K \cdot x_1^{\frac{a_np^e + b_n}{p^e}} \dots x_n^{\frac{a_1p^e + b_1}{p^e}} \right]^{n_i},$$
where $n_i = \frac{p^{ne}- k_e}{d}$ or $\frac{p^{ne}- k_e}{d} +1.$ Hence We can conclude that if $d = p^eq$,
$$(\omega_{S^{(d)}})^{\frac{1}{p^e}} = S_{[0]}^{\oplus n_0} \oplus\cdots\oplus S_{[d-1]} ^{\oplus n_{d-1}},$$
with $n_i = \frac{p^{ne}- k_e}{d}$ or $\frac{p^{ne}- k_e}{d} +1$ and this proves $(2)$.\qed

\section{Main Theorem}
In this section, we prove the conjecture by I. Smirnov and K. Tucker on the dual $F$-signature of the Veronese rings $S^{(d)}.$ For the sake of convenience, we restate here the conjecture.

\begin{Theorem}\label{mainthm}
Let $K$ be a perfect field of prime characteristic $p > 0$ and $S^{(d)}$ the $d$-Veronese subring of $K[x_1,\dots,x_n]$. Then, the dual $F$-signature of $S^{(d)}$ is
\begin{gather*}
s_{dual}\left(S^{(d)}\right) = \frac{1}{d}\bigg\lceil\frac{d}{n}\bigg\rceil.
\end{gather*}       
\end{Theorem}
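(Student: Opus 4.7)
The plan is to reduce to a combinatorial question about surjections between specific $S^{(d)}$-modules and then close the answer from both sides. By Proposition~3.1 we write $(\omega_{S^{(d)}})^{1/p^e} = \bigoplus_{i=0}^{d-1} S_{[i]}^{\oplus n_i}$ with each $n_i$ equal to $(p^{ne}-k_e)/d$ or one more, so $b_{p^e}$ equals the largest $N$ admitting an $S^{(d)}$-surjection $\bigoplus_i S_{[i]}^{n_i}\twoheadrightarrow S_{[k]}^N$. Since the $O(1)$ ambiguity in the $n_i$'s is absorbed by the $\limsup$, we may treat $n_i\sim p^{ne}/d$ throughout.

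For the upper bound I would pass modulo the irrelevant maximal ideal $\fm$ of $S^{(d)}$. A fraction-field argument identifies $\Hom_{S^{(d)}}(S_{[i]},S_{[k]})$ with $S_{[k-i]}$ acting by multiplication; modulo $\fm$ this becomes a $K$-linear map $S_i\to S_k$ given by multiplication by a degree-$(k-i)$ form, and it vanishes outright unless $i\le k$ (otherwise the image already lies in $\fm S_{[k]}$). Totaling the admissible contributions across the summands yields an inequality of the shape
\begin{gather*}
N\,\binom{k+n-1}{n-1}\;\le\;\sum_{i=0}^{k} n_i \binom{i+n-1}{n-1},
\end{gather*}
whence the hockey-stick identity $\sum_{i=0}^{k}\binom{i+n-1}{n-1}=\binom{k+n}{n}$ collapses the right-hand side; dividing by $p^{ne}$ and letting $e\to\infty$ produces the stated upper bound.

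For the lower bound I would exhibit an explicit surjection realising this count. Use the $n_k$ summands $S_{[k]}$ to hit a block of $n_k$ target copies via an identity-like map, and fill the remaining copies of $\omega$ using multiplications by carefully chosen degree-$(k-i)$ forms acting on the $S_{[i]}$-summands with $i<k$, so as to supply the missing generators of $S_{[k]}^N/\fm S_{[k]}^N$. The auxiliary result of Section~4 is presumably the linear-algebra or transversality statement guaranteeing that such a generic choice actually spans the cokernel of the identity block modulo $\fm$; once mod-$\fm$ surjectivity is secured, graded Nakayama lifts the construction to a genuine surjection of $S^{(d)}$-modules.

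The main obstacle is precisely this spanning claim: dimension counting alone only shows that a surjection is \emph{possible}, and one has to verify that the multiplication pairings $S_{k-i}\otimes S_i \to S_k$ can be combined so as to cover the cokernel without wasteful overlap. I expect Section~4 to isolate and settle exactly this combinatorial point, after which the matching upper and lower bounds deliver the stated formula $\frac{1}{d}\lceil d/n\rceil$.
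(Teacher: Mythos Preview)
Your overall architecture matches the paper's: decompose $(\omega_{S^{(d)}})^{1/p^e}$ via Proposition~3.1, bound $N$ above by counting generators modulo $\fm$, and bound it below by exhibiting explicit surjections; in particular your upper-bound argument (only $i\le k$ contributes, then the hockey-stick identity) is exactly Proposition~4.2.

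The difference lies in how the lower bound is realised. You propose a single large map built from multiplication by generic degree-$(k-i)$ forms and anticipate a transversality statement ensuring the multiplication pairings $S_{k-i}\otimes S_i\to S_k$ span without waste. The paper instead avoids genericity entirely: it writes down the specific ``staircase'' matrix
\[
\Psi\colon S(-1)^{n+j-1}\longrightarrow S^{j}
\]
of Proposition~4.3 and proves, by a monomial-order induction in the Appendix, that the Fitting ideal satisfies $I_j(\Psi)=(x_1,\dots,x_n)^j$; hence $\coker\Psi$ vanishes in degrees $\ge j$ and one obtains a concrete surjection $S_{[j-1]}^{\,n+j-1}\twoheadrightarrow S_{[j]}^{\,j}$. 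These one-step maps are then composed inductively (Proposition~4.4) to produce, for each $i<k$, a surjection $S_{[i]}^{e_i}\twoheadrightarrow S_{[k]}^{f_i}$ with the optimal ratio $f_i/e_i=\binom{n+i-1}{n-1}\big/\binom{n+k-1}{n-1}$, and the lower bound follows by taking direct sums (Proposition~4.5). Your approach would also succeed, but the paper's explicit matrix plus the Fitting-ideal equality replaces the spanning/transversality step you flagged as the main obstacle with a direct computation.
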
\vspace{0.1cm}

We break the proof of the theorem in several steps. Firstly, by counting the number of generators of each module, we give an upper bound $s_{dual}(S^{(d)})$.
\noindent \begin{Proposition}\label{auxlemma}
   Let $S = K[x_1, \dots, x_n]$ be a polynomial ring over a perfect field $K$ of characteristic $p>0$ and $d$ a positive integer. Consider $S^{(d)}$ the $d$-th Veronese subring of $S$. Then, 
	$$s_{dual} (S^{(d)})  \leq \frac{1}{n}\bigg\lceil \frac{n}{d}\bigg\rceil.$$
\end{Proposition}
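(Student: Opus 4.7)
The plan is to prove this upper bound via a dimension count on the lowest-degree graded component of $\omega_R^N$. As in the previous proposition, write $\omega_R \cong S_{[k]}$ with $k \in \{0, 1, \ldots, d-1\}$ satisfying $k \equiv -n \pmod d$, i.e., $k + n = \lceil n/d\rceil d$; the minimal generators of $\omega_R$ then sit in $S$-degree $k$ and span $S_k$, of $K$-dimension $\binom{k+n-1}{n-1}$. Given any surjection $\phi : F^e_*\omega_R \twoheadrightarrow \omega_R^N$, I would use the decomposition $F^e_*\omega_R \cong \bigoplus_{i=0}^{d-1} S_{[i]}^{n_i}$ from the previous proposition and examine how each summand contributes to the degree-$k$ piece $S_k^N$ of the target.

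The core observation is that $R$-module maps $S_{[i]} \to S_{[k]}$ are multiplications by elements of the fractional ideal $\Hom_R(S_{[i]}, S_{[k]}) = S_{[k-i]}$, whose minimal-degree element lies in $S$-degree $(k-i) \bmod d$. For $i \leq k$ this minimum equals $k - i$, so a degree-$i$ generator of $S_{[i]}$ can be sent into $S_k$ via multiplication by an element of $S_{k-i}$; for $i > k$ the minimum jumps to $k-i+d$, forcing the image of that entire summand into degrees at least $k + d$ and thus contributing nothing to the $S_k$-component of $\omega_R^N$. Since each $S_{[i]}$ has exactly $\binom{i+n-1}{n-1}$ minimal generators concentrated in degree $i$, the surjectivity of $\phi$ onto the degree-$k$ piece of $\omega_R^N$ forces
\[
N \binom{k+n-1}{n-1} \;\leq\; \sum_{i=0}^{k} n_i \binom{i+n-1}{n-1}.
\]

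Dividing by $p^{en}$, invoking the asymptotic $n_i/p^{en} \to 1/d$ from the previous proposition, and applying the hockey-stick identity $\sum_{i=0}^{k}\binom{i+n-1}{n-1} = \binom{k+n}{n}$, one obtains
\[
s_{dual}(S^{(d)}) \;\leq\; \frac{1}{d}\cdot\frac{\binom{k+n}{n}}{\binom{k+n-1}{n-1}} \;=\; \frac{k+n}{nd} \;=\; \frac{1}{n}\bigg\lceil\frac{n}{d}\bigg\rceil.
\]

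The main obstacle will be rigorously justifying the vanishing claim for summands with $i > k$ when $\phi$ is not assumed to be graded. The key point is that $\omega_R^N$ has no nonzero graded pieces below degree $k$, so for any source element $m$ and any $r \in R$ we have $(r\cdot\phi(m))_k = r_0\cdot\phi(m)_k$, meaning only the degree-zero component of the $R$-action contributes to the degree-$k$ image. This reduces the verification to analyzing the degree-$k$ component of the restriction $\phi|_{S_{[i]}} \in \Hom_R(S_{[i]}, S_{[k]}^N) \cong S_{[k-i]}^N$, which vanishes precisely when $i > k$.
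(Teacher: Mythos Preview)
Your proof is correct and follows essentially the same route as the paper's: both use the decomposition $F^e_*\omega_R \cong \bigoplus_i S_{[i]}^{n_i}$, argue that summands with $i>k$ cannot contribute to a surjection onto $S_{[k]}^N$, and then compare minimal numbers of generators to obtain the inequality $N\binom{n+k-1}{n-1} \leq \sum_{i=0}^k n_i\binom{n+i-1}{n-1}$ before passing to the limit. The paper's version simply asserts that ``$S^{(d)}$-linearity'' forces the restriction $\bigoplus_{i\le k} S_{[i]}^{n_i}\twoheadrightarrow S_{[k]}^N$ to remain surjective, whereas you supply the missing justification via $\Hom_R(S_{[i]},S_{[k]})\cong S_{[k-i]}$ and the observation that for $i>k$ this module lives in strictly positive degrees, so the image lands in $\mathfrak m\,S_{[k]}^N$; your handling of the non-graded case is a welcome extra level of care.
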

\noindent \textit{Proof:} Recall that our goal is to find the largest $N$ such that there is surjection $$ \omega_{S^{(d)}}^{\frac{1}{p^e}} = S_{[0]}^{\oplus n_0} \oplus\cdots\oplus S_{[d-1]} ^{\oplus n_{d-1}} \twoheadrightarrow \omega_{S^{(d)}}^N$$ as $S^{(d)}$-modules. The $S^{(d)}$-linearity of such a surjection implies that the induced map $${S^{(d)}}^{\frac{1}{p^e}} = S_{[0]}^{\oplus n_0} \oplus\cdots\oplus S_{[k]}^{\oplus n_k} \twoheadrightarrow \omega_{S^{(d)}}^N = S_{[k]}^N$$
also is a surjection, where k is the remainder of n when divided by d.
			
\indent Now recall that the minimal number of generators of $S_{[i]}$ as a $S^{(d)}$-module is given by $\binom{n+i-1}{n-1}.$ Hence, by counting the minimal number of generators on each side, we have $$\binom{n+k-1}{n-1} N \leq \sum\limits_{i=0}^{k} \binom{n+i-1}{n-1}n_i,$$ which implies that  $$N \leq \frac{\sum\limits_{i=0}^{k} \binom{n+i-1}{n-1}n_i}{\binom{n+k-1}{n-1}}.$$
			
\noindent Now it is clear that $$\lim\limits_{e \to \infty} \frac{n_i}{p^{ne}} = \frac{1}{d}.$$ Hence $$s_{dual}(S^{(d)}) = \lim\limits_{e\to\infty} \frac{N}{p^{nd}} \leq \frac{\sum\limits_{i=0}^{k} \binom{n+i-1}{n-1}}{d\binom{n+k-1}{n-1}} = \frac{\binom{n+k}{n}}{d\binom{n+k-1}{n-1}}=\frac{1}{n}\bigg\lceil \frac{n}{d}\bigg\rceil.$$\qed	\\[0.3cm]
\noindent We now establish the reverse inequality by studying the possible $S^{(d)}$-linear surjections $S_{[i]}^e \twoheadrightarrow S_{[j]}^f$ for $0\leq i\leq j <d$. Notice that the $S^{(d)}$-linearity implies that $i \leq j.$ Furthermore notice that this is equivalent to giving a homogeneous map $S(i-j)^e\to S^f$ which is surjective in degree $j$. To construct such maps, we have the following proposition, which proof is postponed to the next section.\\[0.3cm]
\begin{Proposition}\label{matrix}
The homogeneous map $S(-1)^{n+k-1}\to S^{k}$ given by the matrix 
$$\Psi =\begin{bmatrix}
x_1&x_2&x_3&\dots& x_{n-1}& x_n& 0&0&\dots&0\\
0&x_1&x_2&x_3&\dots& x_{n-1}& x_n& 0&\dots &0\\
0&0& x_1&x_2&x_3&\dots& x_{n-1}& x_n& \dots &0\\
\vdots&\vdots&\vdots&\vdots&\vdots&\vdots&\vdots&\vdots&\ddots&\vdots\\
0&0&\dots&0&x_1&x_2&x_3&\dots&x_{n-1}&x_n
\end{bmatrix}$$
is surjective in degree $j\geq k$.
\end{Proposition}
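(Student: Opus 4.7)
The plan is to reduce the surjectivity claim to the single critical degree $j=k$ by a standard propagation argument, where a dimension count converts the question into injectivity in that degree. Injectivity is then obtained by invoking the Buchsbaum--Rim complex attached to $\Psi$, or alternatively by an iterated Koszul-syzygy analysis.

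\textbf{Propagation and the critical degree.} The image $\Imm(\Psi)$ is a graded $S$-submodule of $S^{k}$, so $\Imm(\Psi)_{j+1}\supseteq\fm\cdot\Imm(\Psi)_{j}$. Whenever $\Imm(\Psi)_{j_{0}}=(S_{j_{0}})^{k}$ for some $j_{0}\geq k$, the right-hand side equals $\fm\cdot(S_{j_{0}})^{k}=(S_{j_{0}+1})^{k}$, so surjectivity propagates upward. Hence it suffices to treat $j=k$. At this degree the binomial identity
\[
(n+k-1)\binom{n+k-2}{n-1}\;=\;k\binom{n+k-1}{n-1}\;=\;\frac{(n+k-1)!}{(n-1)!\,(k-1)!}
\]
shows the source $(S_{k-1})^{n+k-1}$ and the target $(S_{k})^{k}$ have equal $K$-dimension, so surjectivity in degree $k$ is equivalent to injectivity.

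\textbf{Injectivity via Buchsbaum--Rim.} One verifies that the ideal of maximal minors satisfies $I_{k}(\Psi)=\fm^{k}$. For every weakly increasing sequence $1\leq v_{1}\leq\cdots\leq v_{k}\leq n$, the $k\times k$ minor on the columns $\ell_{i}=i+v_{i}-1$ has leading monomial $x_{v_{1}}\cdots x_{v_{k}}$, and a standard Gr\"obner-style reduction then recovers every degree-$k$ monomial as a linear combination of such minors. Thus $I_{k}(\Psi)=\fm^{k}$ has grade $n$, the maximum possible for a $k\times(n+k-1)$ matrix, and the Buchsbaum--Rim complex attached to $\Psi$ is exact. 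It provides a graded free resolution
\[
\cdots\;\to\;S(-(k+1))^{\binom{n+k-1}{k+1}}\;\to\;S(-1)^{n+k-1}\;\xrightarrow{\ \Psi\ }\;S^{k}\;\to\;\coker(\Psi)\;\to\;0,
\]
so that $\Ker(\Psi)$ equals the image of the leftmost displayed map. Since that module is generated in degree $k+1$, its degree-$k$ piece vanishes, whence $\Ker(\Psi)_{k}=0$.

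\textbf{Main obstacle.} The principal technical point is the verification $I_{k}(\Psi)=\fm^{k}$, namely exhibiting every degree-$k$ monomial as a combination of $k\times k$ minors of the banded matrix $\Psi$; once this is settled, the rest is formal. A more elementary route, bypassing the Buchsbaum--Rim machinery, runs the injectivity step directly: the first equation forces $(g_{1},\ldots,g_{n})$ to be a Koszul syzygy of $x_{1},\ldots,x_{n}$, and iterating through the remaining shifted equations expresses each new unknown in terms of earlier Koszul coefficients; the regularity of $x_{1},\ldots,x_{n}$ then forces all components of $g$ to vanish. Carrying this bookkeeping through all $k$ relations is what I expect to be the most delicate part of either approach.
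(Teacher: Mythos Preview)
Your proposal is correct, and at its core it coincides with the paper's argument: both routes rest on the identity $I_{k}(\Psi)=\fm^{k}$, which you correctly single out as the main technical obstacle. The paper devotes its appendix to proving exactly this, by an induction on the lexicographic order on degree-$k$ monomials: one shows that the diagonal minor $M_{\alpha_{1},\dots,\alpha_{n}}$ attached to a monomial $m=x_{1}^{\alpha_{1}}\cdots x_{n}^{\alpha_{n}}$ equals $m$ plus a combination of strictly smaller monomials, whence every monomial lies in $I_{k}(\Psi)$ by induction. Your sketch (``leading monomial $x_{v_{1}}\cdots x_{v_{k}}$ plus Gr\"obner-style reduction'') is precisely this argument in outline.

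Where you differ from the paper is in how surjectivity is extracted from $I_{k}(\Psi)=\fm^{k}$. The paper uses the one-line Fitting ideal inclusion $\fm^{k}=\Fitt_{0}(\coker\Psi)\subseteq\Ann(\coker\Psi)$, which immediately kills $(\coker\Psi)_{j}$ for $j\geq k$ since $\coker\Psi$ is generated in degree $0$. Your route---propagation to reduce to degree $k$, dimension count to convert surjectivity to injectivity, then Buchsbaum--Rim acyclicity to force $\Ker(\Psi)_{k}=0$---is valid but heavier: you invoke the exactness criterion for the Buchsbaum--Rim complex (which itself uses that $\grade I_{k}(\Psi)=n$, i.e.\ again the equality $I_{k}(\Psi)=\fm^{k}$) to reach a conclusion that the Fitting argument gives for free. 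Your alternative ``iterated Koszul syzygy'' route is a genuinely different, more elementary path that would bypass the minor computation entirely, but as you note the bookkeeping there is nontrivial and you have not carried it out.
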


When applied for $k\leq d-1$, \autoref{matrix} shows that one can construct a surjection $S_{[k-1]}^{n+k}\twoheadrightarrow S_{[k]}^{1+k}$ and the domain and target of such map has the same number of minimal generators over $S^{(d)}.$ Hence the surjection constructed in \autoref{matrix} is optimal.\\[0.3cm]
\noindent Now we are able to count on how many copies of $S_{[i]}$ we need to build a surjection $S_{[i]}^{e_i}\twoheadrightarrow S_{[k]}^{f_i}$. Obviously, if  $i=k,$  then  $e_i = f_i =1,$ while \autoref{matrix} conceives the case $e_{k-1} = n+k-1$ and $f_{k-1} = k.$		
\begin{Proposition}
  Let $0\leq i <k<d.$ Then there is a surjection 
\begin{equation*}
				S^{e_i}_{[i]}\twoheadrightarrow S^{f_i}_{[k]}
\end{equation*} such that 
\begin{gather*}
\frac{f_i}{e_i} = \frac{\binom{n+i-1}{n-1}}{\binom{n+k-1}{n-1}}.    
\end{gather*}
\end{Proposition}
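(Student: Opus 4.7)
The plan is to iterate Proposition~\ref{matrix} and telescope the resulting surjections via direct sums. For each $j$ with $i \leq j \leq k-1$, applying that proposition with its parameter $k$ replaced by $j+1$ produces an $S^{(d)}$-linear surjection
$$\phi_j : S_{[j]}^{n+j} \;\twoheadrightarrow\; S_{[j+1]}^{\,j+1},$$
exactly as in the case $j=k-1$ already singled out in the text: the $(j+1)\times(n+j)$ matrix defines a homogeneous $S$-linear map $S(-1)^{n+j} \to S^{j+1}$ which is surjective in all degrees $\geq j+1$, and restricting to the degrees $\equiv j+1 \pmod d$ yields the claimed surjection of $S^{(d)}$-modules.

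I would then argue by downward induction on $i$ that there is an $S^{(d)}$-linear surjection $\Phi_i : S_{[i]}^{e_i} \twoheadrightarrow S_{[k]}^{f_i}$ with
$$e_i \;=\; \prod_{j=i}^{k-1}(n+j), \qquad f_i \;=\; \prod_{j=i}^{k-1}(j+1).$$
The base case $i = k-1$ is exactly $\phi_{k-1}$. For the inductive step, given $\Phi_{i+1}$, I take $e_{i+1}$ direct-sum copies of $\phi_i$, giving a surjection $S_{[i]}^{(n+i)e_{i+1}} \twoheadrightarrow S_{[i+1]}^{(i+1)e_{i+1}}$, and $(i+1)$ direct-sum copies of $\Phi_{i+1}$, giving a surjection $S_{[i+1]}^{(i+1)e_{i+1}} \twoheadrightarrow S_{[k]}^{(i+1)f_{i+1}}$. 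Since the common middle term matches, the composition is the required $\Phi_i$ with $e_i = (n+i)e_{i+1}$ and $f_i = (i+1)f_{i+1}$, in agreement with the product formula.

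A routine factorial simplification then yields
$$\frac{f_i}{e_i} \;=\; \prod_{j=i}^{k-1}\frac{j+1}{n+j} \;=\; \frac{k!\,(n+i-1)!}{i!\,(n+k-1)!} \;=\; \frac{\binom{n+i-1}{n-1}}{\binom{n+k-1}{n-1}},$$
which is exactly the ratio claimed. The only delicate point in this argument is choosing the number of copies at each inductive step so that the codomain of $\phi_i$ lines up with the domain of $\Phi_{i+1}$; all the genuine work is packaged inside Proposition~\ref{matrix}, whose proof is postponed to the next section.
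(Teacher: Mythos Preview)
Your argument is correct and follows essentially the same route as the paper: downward induction on $i$ (equivalently, induction on $k-i$), using Proposition~\ref{matrix} at each step to produce $S_{[j]}^{n+j}\twoheadrightarrow S_{[j+1]}^{j+1}$, then taking the appropriate number of direct-sum copies so that the intermediate modules match and the maps compose. Your version is in fact slightly more explicit, since you write down closed product formulas for $e_i$ and $f_i$ rather than only tracking the ratio inductively.
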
 \begin{proof}
We prove the theorem by induction on the difference $k-i$, being the case $i=1$ already discussed. Suppose then that we have a surjection \begin{equation}\label{surj1}
			S^{e_{i+1}}_{i+1}\twoheadrightarrow S^{f_{i+1}}_{[k]}
\end{equation} such that \begin{equation}\label{binom1}
		\frac{f_{i+1}}{e_{i+1}} = \frac{\binom{n+i}{n-1}}{\binom{n+k-1}{n-1}}. 
	\end{equation}
		
		\noindent Again by \autoref{matrix}, there is a surjective map $S^{n+i}_{[i]}\twoheadrightarrow S^{i+1}_{[i+1]}$ and,  we have a surjection \begin{equation}\label{surj2}
			S_{[i]}^{e_ {i+1}(n+i)}\twoheadrightarrow S_{[i+1]}^{(i+1)e_{i+1}}
		\end{equation}
	by taking direct sums of these maps. Also, by taking direct sums of the map (\ref{surj1}), one has a surjection 
	\begin{equation}
		S^{(i+1)e_{i+1}}_{i+1}\twoheadrightarrow S^{(i+1)f_{i+1}}_{[k]}.
	\end{equation}
		The composing of the surjections (\ref{surj1} and (\ref{surj2}) yields the surjection 
	 \begin{equation}
			S_{[i]}^{e_ {i+1}(n+i)} \twoheadrightarrow S^{(i+1)f_{i+1}}_{[k]}.
	\end{equation}	
	For the last assertion of the proposition, notice that 
	\begin{equation}
		\frac{f_i}{e_i} = \frac{(i+1)f_{i+1}}{(n+1)e_{i+1}}  = \frac{i+1}{n+ i}  \frac{\binom{n+i}{n-1}}{\binom{n+k-1}{n-1}} = \frac{\binom{n+i-1}{n-1}}{\binom{n+k-1}{n-1}}. 
	\end{equation}
\end{proof} 

\indent We are now ready to prove the promised lower bound. 
\noindent \begin{Proposition}
   Let $S = K[x_1, \dots, x_n]$ be a polynomial ring over a perfect field $K$ of characteristic $p>0$ and $d$ a positive integer. Consider $S^{(d)}$ the $d$-th Veronese subring of $S$. Then, 
	$$s_{dual} (S^{(d)})  \geq \frac{1}{n}\bigg\lceil \frac{n}{d}\bigg\rceil.$$
\end{Proposition}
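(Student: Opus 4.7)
The plan is to construct an explicit $S^{(d)}$-linear surjection from $(\omega_{S^{(d)}})^{1/p^e}$ onto a nearly optimal power of $\omega_{S^{(d)}} \cong S_{[k]}$ by assembling the maps built in the previous two propositions. First I would invoke the structure decomposition of Section 3 to write
$$(\omega_{S^{(d)}})^{1/p^e} = \bigoplus_{i=0}^{d-1} S_{[i]}^{\oplus n_i},$$
with each multiplicity satisfying $n_i / p^{ne} \to 1/d$ as $e \to \infty$. For every $0 \le i \le k$, the preceding proposition yields a surjection $\varphi_i : S_{[i]}^{e_i} \twoheadrightarrow S_{[k]}^{f_i}$ whose ratio
$$\frac{f_i}{e_i} = \frac{\binom{n+i-1}{n-1}}{\binom{n+k-1}{n-1}}$$
is a fixed constant independent of $e$.

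The main step is then a simple packaging argument. I would partition the $n_i$ copies of $S_{[i]}$ into $\lfloor n_i / e_i\rfloor$ disjoint blocks of size $e_i$, apply $\varphi_i$ on each block, and send the remaining summands (the leftover block remainders, together with all summands $S_{[i]}$ with $i > k$) to zero. Since a direct sum of surjections is a surjection, this assembles into an $S^{(d)}$-linear surjection
$$(\omega_{S^{(d)}})^{1/p^e} \twoheadrightarrow \omega_{S^{(d)}}^{N_e}, \qquad N_e = \sum_{i=0}^{k} \left\lfloor \frac{n_i}{e_i} \right\rfloor f_i,$$
so the $F$-surjective number of $\omega_{S^{(d)}}$ satisfies $b_{p^e} \ge N_e$.

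Passing to the limit, since $e_i, f_i$ are fixed constants while $n_i \sim p^{ne}/d$, the floor functions introduce only $O(1)$ errors that disappear after dividing by $p^{ne}$. Combined with the hockey-stick identity $\sum_{i=0}^{k}\binom{n+i-1}{n-1} = \binom{n+k}{n}$, this yields
$$s_{dual}(S^{(d)}) \ge \lim_{e \to \infty}\frac{N_e}{p^{ne}} = \frac{1}{d}\sum_{i=0}^{k}\frac{f_i}{e_i} = \frac{\binom{n+k}{n}}{d\binom{n+k-1}{n-1}} = \frac{n+k}{nd} = \frac{1}{n}\left\lceil\frac{n}{d}\right\rceil,$$
where the final equality uses $n+k \equiv 0 \pmod d$, which is forced by $\omega_{S^{(d)}} = S_{[k]}$. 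This matches exactly the upper bound of Proposition \ref{auxlemma}. The main obstacle is not in this step: all the creative work lies in building the matrix $\Psi$ and inductively assembling the $\varphi_i$; here the only care needed is to verify that the discarded summands and block remainders are asymptotically negligible, which is immediate from the constancy of $e_i$ and $f_i$.
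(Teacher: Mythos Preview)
Your proposal is correct and follows essentially the same route as the paper: both assemble the block surjections $\varphi_i:S_{[i]}^{e_i}\twoheadrightarrow S_{[k]}^{f_i}$ from the preceding proposition across $\lfloor n_i/e_i\rfloor$ disjoint groups, discard the remainders and the summands with $i>k$, and pass to the limit using $n_i/p^{ne}\to 1/d$ together with the hockey-stick identity. Your formulation with $N_e=\sum_{i=0}^k\lfloor n_i/e_i\rfloor f_i$ is exactly the paper's $\sum_i f_i(n_{e,i}-r_{e,i})/e_i$, and your observation that $n+k\equiv 0\pmod d$ is the correct justification for the final equality.
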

\begin{proof}
    Let $e$ be such that $p^ed >> e_i$ as in the previous proposition. Then, with $e_i$ copies of $S_{[i]}$ we surject on $f_i$ copies of $S_{[k]}.$ Then, if $r_{e,i}$ is the remainder of $n_{e,i}$ when divided by $e_i,$ we have that $S_{[i]}^{\oplus n_{e, i}}$ can surject in $$\frac{f_i(n_{e,i}-r_{e,i})}{e_i} = \frac{e_i (n_{e,i}-r_{e,i})}{f_i}$$
		copies of $S_{[k]}.$ Summing up in all $i$ and noticing that $\lim\limits_{e \to \infty} \frac{r_{e,i} }{p^{ne}}= 0,$ we have that 
		$$s_{dual}(S^{(d)})=\lim\limits_{e\to\infty} \frac{N}{p^{ne}} \geq \lim\limits_{e\to \infty}\sum\limits_{i=0}^k\frac{\frac{e_i (n_{e,i}-r_{e,i})}{f_i}}{p^{ne}} = \sum\limits_{i=0}^k \frac{e_i}{df_i} = \frac{\sum\limits_{i=0}^{k} \binom{n+i-1}{n-1}}{d\binom{n+k-1}{n-1}} = \frac{\binom{n+k}{n}}{d\binom{n+k-1}{n-1}}=\frac{1}{n}\bigg\lceil \frac{n}{d}\bigg\rceil.$$
\end{proof}
 \section{Appendix: An Auxiliary Lemma}

 In the proof of Theorem \ref{mainthm} we invoked as an auxiliary result Proposition \ref{matrix}, that now we give a complete proof using monomial ordering techniques. For the sake of clarity, we recall the statement that we want to prove.

\begin{Proposition}
The homogeneous map $S(-1)^{n+k-1}\to S^{k}$ given by the matrix 
$$\Psi =\begin{bmatrix}
x_1&x_2&x_3&\dots& x_{n-1}& x_n& 0&0&\dots&0\\
0&x_1&x_2&x_3&\dots& x_{n-1}& x_n& 0&\dots &0\\
0&0& x_1&x_2&x_3&\dots& x_{n-1}& x_n& \dots &0\\
\vdots&\vdots&\vdots&\vdots&\vdots&\vdots&\vdots&\vdots&\ddots&\vdots\\
0&0&\dots&0&x_1&x_2&x_3&\dots&x_{n-1}&x_n
\end{bmatrix}$$
is surjective in degree $j\geq k$
\end{Proposition}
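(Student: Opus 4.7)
The plan is to reformulate the surjectivity via the cokernel $C := \coker(\Psi)$, a graded $S$-module generated in degree $0$ by the images $\bar e_1, \ldots, \bar e_k$ of the standard basis of $S^k$. Since $C$ is generated in degree $0$, surjectivity of $\Psi$ in degree $\geq k$ is equivalent to the annihilation $\fm^k \cdot C = 0$. By Fitting's lemma, the $0$-th Fitting ideal $\Fitt_0(C)$, generated by the $k \times k$ minors of $\Psi$, sits inside $\Ann(C)$; each such minor is homogeneous of degree $k$, so $\Fitt_0(C) \subseteq \fm^k$. The proposition therefore reduces to the reverse inclusion: every degree-$k$ monomial must lie in the ideal generated by the maximal minors of $\Psi$.

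To realize each degree-$k$ monomial $x^\alpha$ in $\Fitt_0(C)$, I would write $x^\alpha = \prod_{r=1}^k x_{s_r}$ with $s_1 \leq \cdots \leq s_k$ and consider the $k \times k$ submatrix $M_\alpha$ of $\Psi$ on the columns $c_r := r + s_r - 1$; these indices are strictly increasing and lie in $\{1, \ldots, n+k-1\}$, so $M_\alpha$ is well-defined. In the Leibniz expansion of $\det M_\alpha$, the identity permutation contributes precisely $x^\alpha$ with sign $+1$. Using the lex order with $x_1 > x_2 > \cdots > x_n$, the core claim is that every non-identity permutation contributes either $0$ (when some entry falls outside the sliding window of $\Psi$) or a monomial strictly lex-greater than $x^\alpha$. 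Granting this, an induction on lex-order finishes the argument: the lex-maximal monomial $x_1^k$ is already a single-term minor coming from the upper-triangular block on columns $1, \ldots, k$, and the identity $\det M_\alpha = x^\alpha + (\textrm{lex-larger terms})$ together with the inductive hypothesis places $x^\alpha$ in $\Fitt_0(C)$ as well.

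The main obstacle is the lex-comparison claim. Setting $\tau := \sigma^{-1}$, the $\sigma$-contribution to $\det M_\alpha$ is $\prod_l x_{l+s_l-\tau(l)}$, so one must show that the sorted sequence $(l + s_l - \tau(l))_l$ is strictly majorized by $(s_l)_l$ whenever $\tau \neq \operatorname{id}$. Dominance is immediate from $\sum_{l\leq r_0}\tau(l) \geq \binom{r_0+1}{2}$ (valid for every permutation), which yields $\sum_{l\leq r_0}(l+s_l-\tau(l)) \leq \sum_{l\leq r_0} s_l$ for every $r_0$ and hence the same bound on the sum of the $r_0$ smallest values. Strictness at the first differing partial sum---which translates directly into $M_\sigma >_{\mathrm{lex}} x^\alpha$---follows because the two multisets can agree only for $\tau = \operatorname{id}$: a rearrangement-inequality argument comparing second moments shows that equal multisets would force $\sum s_l(l-\tau(l)) < 0$, contradicting $\sum s_l\cdot l \geq \sum s_l\cdot \tau(l)$.
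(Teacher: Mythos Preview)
Your argument is correct and shares the paper's overall architecture: both reduce the surjectivity to the equality $\Fitt_0(\coker\Psi)=\fm^k$, both single out for each degree-$k$ monomial $x^\alpha$ the very same $k\times k$ submatrix $M_\alpha$ of $\Psi$ (your column choice $c_r=r+s_r-1$ is exactly the paper's $M_{\alpha_1,\dots,\alpha_n}(k)$), and both run an induction along a lexicographic order on monomials, feeding the ``extra'' terms of $\det M_\alpha$ back into the inductive hypothesis.

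The genuine difference is in how the extra terms are controlled. The paper peels off one variable at a time via iterated Laplace expansion along the first column, arguing at each step that the cofactor attached to a lower-indexed variable yields lex-smaller monomials; this is somewhat recursive and requires tracking several intermediate determinants $M_{0,\dots,0,\alpha_i-e,\dots}(\cdot)$. You instead analyze the full Leibniz expansion in one shot: writing the $\sigma$-term as $\prod_l x_{l+s_l-\tau(l)}$ with $\tau=\sigma^{-1}$, you use the elementary bound $\sum_{l\le r_0}\tau(l)\ge\binom{r_0+1}{2}$ to get weak majorization of the index multiset, and then a neat second-moment/rearrangement trick to rule out equality of multisets when $\tau\neq\mathrm{id}$. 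This is more conceptual and avoids the bookkeeping of the iterated expansion; the paper's route, on the other hand, stays closer to the block-triangular structure of $M_\alpha$ and never needs to invoke majorization or the rearrangement inequality. Either way the induction closes for the same reason: $\det M_\alpha$ equals $x^\alpha$ plus terms already known to lie in the ideal.
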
		

The proposition follows immediately if we prove that $I_{k}(\Psi) = (x_1,\dots,x_n)^{k}$. Indeed, one always has
$$ (x_1,\dots,x_n)^{k} = \Fitt_0(\coker (\Psi)) \subset \Ann(\coker (\Psi))$$ and this implies that $\coker(\Psi)_j = 0$ for $j\geq k.$ Hence, our focus will be on proving that $I_{k}(\Psi) = (x_1,\dots,x_n)^{k}$. 

We set some notations and conventions. Equip the polynomial ring $S$ with lexicographical monomial order $x_1<\cdots < x_n$, and given $r \geq 1$, consider the $r\times (n+r-1)$ matrix
\begin{gather*}
    M(n,r) := \begin{bmatrix}
				x_1&x_2&x_3&\dots& x_{n-1}& x_n& 0&0&\dots&0\\
				0&x_1&x_2&x_3&\dots& x_{n-1}& x_n& 0&\dots &0\\
				0&0& x_1&x_2&x_3&\dots& x_{n-1}& x_n& \dots &0\\
				\vdots&\vdots&\vdots&\vdots&\vdots&\vdots&\vdots&\vdots&\ddots&\vdots\\
				0&0&\dots&0&x_1&x_2&x_3&\dots&x_{n-1}&x_n
			\end{bmatrix}.
\end{gather*}
Next, for each $\alpha_1,\dots,\alpha_n$ non-negative integers such that $\sum_{k=1}^n\alpha_k = r$, consider the $r\times r$ matrix $M_{\alpha_1,\dots,\alpha_n}(r)$ constructed as follows:
\begin{itemize}
    \item For $1 \leq j \leq \alpha_1$, the $j$-th column of $M_{\alpha_1,\dots,\alpha_n}(r)$ is the column of $M(n,r)$ for which $x_1$ appears on the $j$-th row;
    \item For $\big(\sum_{i=1}^k\alpha_i    \big)+1 \leq j \leq \sum_{i=1}^{k+1}\alpha_i$, the $j$-th column of $M_{\alpha_1,\dots,\alpha_n}(r)$ is the column of $M(n,r)$ for which $x_{k+1}$ appears on the $j$-th row.
\end{itemize}
\noindent In order to illustrate this construction, it follows an example. 
\begin{Example}
{\normalfont 
    Let $S = K[x_1,x_2,x_3]$ and $r = 6$ be chosen and consider $\alpha_1 = 2$ , $\alpha_2 = 3$ and $\alpha_3 = 1$. Then one has 
    \begin{gather*} 
    M(3,6)= \begin{bmatrix} 
x_1 & x_2  & x_3  & 0 & 0 & 0 & 0 & 0 \\ 
0 & x_1 & x_2 &  x_3 & 0 & 0 & 0  & 0 \\ 
0 & 0 & x_1 & x_2 & x_3 & 0 & 0 & 0 \\ 
0 & 0 & 0 & x_1 & x_2 & x_3 & 0 & 0\\ 
0 & 0 & 0 & 0 & x_1 & x_2 & x_3 & 0\\ 
0 & 0 & 0 &0  &0  & x_1 & x_2 & x_3
\end{bmatrix} \:\:\:\: \textrm{and} \:\:\:\: M_{3,2,1}(6) = \begin{bmatrix}
x_1 & x_2 & x_3 & 0 & 0  &0  \\ 
0 & x_1 & x_2 & 0 & 0 &0 \\ 
0 & 0 & x_1  & x_3 & 0 & 0\\ 
0 & 0 & 0 & x_2 & x_3 &0 \\ 
0 & 0 & 0 & x_1 & x_2 & 0 \\ 
0 & 0 & 0 & 0 & x_1 & x_3 
\end{bmatrix}  .
    \end{gather*}
}
\end{Example}
\noindent Notice that each $r\times r$ minor of $M(n,r)$ is determinant of $M_{\alpha_1,\dots,\alpha_n}(r)$ for some non-negative integers $\alpha_1,\dots,\alpha_n$ such that $\sum_{k=1}^n\alpha_k = r$. In the next proposition, denote
\begin{gather*}
    \mathfrak{M}(n,r) = \bigg\{(x_1,\dots,x_n) \in \mathbb{N}_0^n\:\:;\:\:\sum_{k=1}^nx_k = r\bigg\}.
\end{gather*}
Note that $x_1^r$ is the minimum element of $\mathfrak{M}(n,r)$.

We are now ready to prove the desired equality.
\begin{Proposition}
    Let $S = K[x_1,\dots,x_n]$ be the polynomial ring in $n$ indeterminates over a field $K$ equipped with lexicographical monomial ordering $x_1 < \cdots < x_n$ and $r$ a positive integer. Then
    \begin{gather*}
        I_n\big(M(n,r)\big) = (x_1,\dots,x_r)^r
    \end{gather*} 
\end{Proposition}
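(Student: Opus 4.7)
The statement I will prove (fixing the evident typos: the subscript should be $r$ and the ideal should be $(x_1, \ldots, x_n)^r$) is $I_r(M(n,r)) = (x_1, \ldots, x_n)^r$. The containment $\subseteq$ is immediate since every entry of $M(n,r)$ lies in the maximal ideal $(x_1, \ldots, x_n)$. For the reverse containment, I will show that each monomial $x^\alpha$ with $|\alpha| = r$ belongs to $I_r(M(n,r))$ by downward induction on the lexicographic order (with $x_1 < x_2 < \cdots < x_n$) on the tuples $\alpha \in \mathfrak{M}(n,r)$.

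The central observation is that for each $\alpha$, the submatrix $M_\alpha = M_{\alpha_1, \ldots, \alpha_n}(r)$ introduced in the excerpt has diagonal entries $x_{k(1)}, x_{k(2)}, \ldots, x_{k(r)}$, where $k(j)$ denotes the variable index of the $x_{k(j)}$-block containing the $j$-th column (so $k(\cdot)$ is weakly increasing by construction). Hence the identity permutation's contribution to $\det M_\alpha$ equals $x_1^{\alpha_1} \cdots x_n^{\alpha_n} = x^\alpha$. The key lemma I will prove is: every permutation $\sigma$ of $\{1, \ldots, r\}$ other than the identity which contributes a non-zero product to $\det M_\alpha$ contributes a monomial $x^\beta$ strictly lex-larger than $x^\alpha$. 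Granting this, the induction closes immediately: at the lex-top tuple $(0, \ldots, 0, r)$ the matrix $M_{(0, \ldots, 0, r)}$ is lower-triangular with $x_n$ on the diagonal, so $\det M_{(0, \ldots, 0, r)} = x_n^r \in I_r$; for any other $\alpha$ one has $x^\alpha = \det M_\alpha - \sum_{\beta > \alpha} c_\beta x^\beta$, with each $x^\beta$ already in $I_r$ by the inductive hypothesis, and $\det M_\alpha \in I_r$ by definition.

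To prove the key lemma, I analyze the permutation expansion directly. The $(i,j)$-entry of $M_\alpha$ is $x_{j + k(j) - i}$ when this index lies in $[1, n]$ and zero otherwise, so for a permutation $\sigma$ with non-zero product one has $\prod_j x_{t(j)}$ where $t(j) := j + k(j) - \sigma^{-1}(j)$. For $\sigma \neq \mathrm{id}$, let $j^* := \max\{j : \sigma^{-1}(j) \neq j\}$; then $\sigma^{-1}$ permutes $\{1, \ldots, j^*\}$ without fixing $j^*$, so $\sigma^{-1}(j^*) < j^*$ and $t(j^*) \geq k(j^*) + 1$. Because $k$ is weakly increasing, $k(j) \leq k(j^*) < t(j^*)$ for every $j \leq j^*$, while $t(j) = k(j)$ for $j > j^*$. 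Setting $m^* := t(j^*)$, this forces $\#\{j : t(j) \geq m^*\} > \#\{j : k(j) \geq m^*\}$, and the weak inequality $\geq$ persists at all higher thresholds $m > m^*$; a short bookkeeping argument then shows the topmost threshold at which the two counts differ is strictly in favor of $\beta$, equivalently $\beta > \alpha$ in lex.

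The main obstacle is exactly this combinatorial claim: controlling how non-identity permutations distort the diagonal exponents. Since contributions from $\sigma^{-1}(j) > j$ lower the variable index while contributions from $\sigma^{-1}(j) < j$ raise it, one must verify that the net effect at the topmost variable level is always a strict increase. The rightmost-moved-column argument centered on $j^*$ is the key device to close this gap; everything else is routine bookkeeping once the lemma is in hand.
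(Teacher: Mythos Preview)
Your proof is correct and shares with the paper the same overall framework---show that in $\det M_\alpha$ the diagonal term $x^\alpha$ is extremal for a suitable monomial order, then induct---but the execution is genuinely different. The paper works \emph{upward} from the base case $x_1^r$ and peels off the determinant by iterated cofactor expansion along the first column: at each stage it writes $\det M_{0,\dots,0,\alpha_i-e,\dots,\alpha_n}$ as $x_i\cdot(\text{smaller determinant})$ plus terms divisible by some $x_{i-j}$ with $j\ge 1$, which forces the error terms to have strictly larger exponent on a lower-indexed variable and hence to be smaller in the paper's order. This requires a nested induction (on the block index $i$ and on $e$ within each block). Your route instead works \emph{downward} from $x_n^r$ and replaces the iterated cofactors by a single combinatorial lemma on the full Leibniz expansion: the rightmost-moved-column index $j^*$ pins down a threshold $m^*=t(j^*)>k(j^*)$ above which the cumulative tail counts $\sum_{l\ge m}\beta_l$ weakly dominate $\sum_{l\ge m}\alpha_l$ (strictly at $m^*$), and this forces $\beta>\alpha$ at the topmost coordinate where they differ. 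The two arguments use different total orders (the paper compares from the $x_1$-end, you from the $x_n$-end), so neither subsumes the other, but your permutation argument is cleaner: one lemma, one induction, no nesting. The only thing left implicit in your write-up is the final ``bookkeeping'' step---that tail-sum domination with a strict inequality somewhere implies lex-larger---but that is a two-line observation (take $m_0$ maximal with $\beta_{m_0}\ne\alpha_{m_0}$; then $\beta_{m_0}-\alpha_{m_0}=\sum_{l\ge m_0}\beta_l-\sum_{l\ge m_0}\alpha_l\ge 0$, and it cannot be zero).
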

\noindent\textit{Proof:}

Before starting the actual proof we first describe loosely what is the idea behind it. In the previous lines, we noticed that $x_1^r$ belongs to the ideal $I_n\big(M(n,r)\big)$. Our main goal is to prove that if all monomials $m'$ that are smaller than a given monomial $m$ belongs to $I_n\big(M(n,r)\big),$ then $m$ also belongs to $I_n\big(M(n,r)\big).$ We prove this by giving an explicit $r\times r$ minor of $M(n,r)$ consisting of a combination of $m$ and smaller monomials. The result then follows by induction.
\\
Now we proceed with the proof. It is clear that $I_n\big(M(n,r)\big) \subseteq (x_1,\dots,x_r)^r$.  In order to prove the other inclusion, it is enough to show that $x_1^{\alpha_1}\cdots\: x_n^{\alpha_n} \in I_r\big (M(n,r)\big )$ for any $n$-tuple $(\alpha_1,\dots,\alpha_n)$ of non-negative integers with $\sum_{k=1}^n\alpha_k = r$. We'll prove the following claim:\\ 
\begin{center}
Let $m = x_1^{\alpha_1}\cdots\: x_n^{\alpha_n}$ a monomial with $\sum_{k=1}^n\alpha_k = r$ be chosen. If $m' \in I_r\big (M(n,r)\big )$ for all $m' < m$, then $m \in I_r\big (M(n,r)\big )$.    
\end{center}

\noindent As mentioned before $x_1^r \in I_r\big (M(n,r)\big )$. Indeed, if $\alpha_1 = r$ and $\alpha_i = 0$ for all $1 < i \leq n$, one has 
\begin{gather*}
    \det\big( M_{r,0,\dots,0}\big) = \det\begin{bmatrix}
x_1 &x_2  & \cdots  & x_{r-1}  & x_r \\ 
0 & x_1 & \cdots & x_{r-2} & x_{r-1}\\ 
0 & 0 & \ddots & \vdots & \vdots \\ 
0 & 0 &\cdots  & x_1 & x_2 \\ 
0 & 0 & \cdots  & 0 & x_1
\end{bmatrix} = x_1^r.
\end{gather*}
Next let $m = x_1^{\alpha_1}\cdots\: x_n^{\alpha_n}$ be a monomial with $\sum_{k=1}^n\alpha_k = r$. Now let $m$ be a degree $r$ monomial and suppose that for any $m'< m$ and that $m' \in I_r\big (M(n,r)\big )$. Observe that $$\det\big(M_{\alpha_1,\alpha_2,\dots,\alpha_r}(r)\big) = x_1^{\alpha_1}\det\big(M_{0,\alpha_2,\dots,\alpha_r}(r-\alpha_1)\big)$$
and that
\begin{gather*}
    \det\big(M_{0,\alpha_2,\dots,\alpha_r}(r - \alpha_1)\big) = x_2\det\big(M_{0,\alpha_2-1,\dots,\alpha_r}(r - \alpha_1-1)\big) - x_1f(x_1,\dots,x_ n),
\end{gather*}
where $f(x_1,\dots,x_ n)$ is a homogeneous polynomial in $S$ with degree $r-(\alpha_1 +1)$. Hence
\begin{gather*}
    \det\big(M_{\alpha_1,\alpha_2,\dots,\alpha_r}(r)\big) = x_1^{\alpha_1}x_2\det\big(M_{0,\alpha_2-1,\dots,\alpha_r}(r - \alpha_1-1)\big) - x_1^{\alpha_1+1}f(x_1,\dots,x_ n)
\end{gather*}
Since each monomial of $x_1^{\alpha_1+1}f(x_1,\dots,x_ n)$ is smaller than $m$ and $\det\big(M_{\alpha_1,\alpha_2,\dots,\alpha_r}(r)\big)$ is a minor of $M(n,r)$, by induction hypothesis, one concludes that $$x_1^{\alpha_1}x_2\det\big(M_{0,\alpha_2-1,\dots,\alpha_r}(r - \alpha_1-1)\big) \in I_r(M(n,r)).$$ Suppose that we have proved that $x_1^{\alpha_1}x_2^i\det\big(M_{0,\alpha_2-i,\dots,\alpha_r}(r - \alpha_1-i)\big) \in I_r(M(n,r))$ for all $1\leq i < \alpha_2 $. Again we have    
\begin{gather*}
    x_1^{\alpha_1}x_2^i\det\big(M_{0,\alpha_2-i,\dots,\alpha_r}(r - \alpha_1 -i)\big) = x_1^{\alpha_1}x_2^i\bigg(x_2\det\big(M_{0,\alpha_2-(i+1),\dots,\alpha_r}(r - \alpha_1 - (i + 1))\big) - x_1f'(x)\bigg) \\= x_1^{\alpha_1}x_2^{i+1}\det\big(M_{0,\alpha_2-(i+1),\dots,\alpha_r}(r - \alpha_1 - (i + 1))\big)  - x_1^{\alpha_1+1}x_2^{i}f'(x),
\end{gather*}
where $f'(x_1,\dots,x_ n)$ is a homogeneous polynomial in $S$ with degree $r-(\alpha_1 +i +1)$. Similarly, since each monomial of $x_1^{\alpha_1+1}x_if'(x_1,\dots,x_ n)$ is smaller than $m$ and $$x_1^{\alpha_1}x_2^i\det\big(M_{0,\alpha_2-i,\dots,\alpha_r}\big)(r - \alpha_1 - i) \in I_r(M(n,r)),$$ by induction hypothesis, one concludes that $$x_1^{\alpha_1}x_2^{i+1}\det\big(M_{0,\alpha_2-(i+1),\dots,\alpha_r}(r - \alpha_1 - (i + 1))\big) \in I_r(M(n,r)).$$ Proceeding until $i = \alpha_2-1$ and repeating the argument, one gets  
\begin{gather*}
    x_1^{\alpha_1}x_2^{\alpha_2}\det\big(M_{0,0,\alpha_3\dots,\alpha_r}(r - \alpha_1 - \alpha_2)\big) \in I_r(M(n,r).
\end{gather*}
In general, let $1 \leq i \leq n$  and $0\leq e < \alpha_i$. Setting $\tau = r - \sum_{k=1}^i\alpha_k$, note that 
\begin{gather*}
    \det\big(M_{0,\dots,0,\alpha_i - e,\dots,\alpha_r}(\tau - e)\big) = x_i \det\big(M_{0,\dots,0,\alpha_i - (e+1),\dots,\alpha_r}(\tau -(e +1))\big) + \sum_{j=1}^{i-1}(-1)^jx_{i-j}\det(S_j),
\end{gather*}
where $S_j$ is the submatrix $M_{0,\dots,0,\alpha_i - e,\dots,\alpha_r}(\tau - e)$ obtained by omission of the first column and $j$ row. Thus    
\begin{gather*}
    x_1^{\alpha_1}\cdots\: x_i^{e}\det\big(M_{0,\dots,0,\alpha_i - e,\dots,\alpha_r}(\tau - e)\big) = x_1^{\alpha_1}\cdots\: x_i^{e+1} \det\big(M_{0,\dots,0,\alpha_i - (e+1),\dots,\alpha_r}(\tau - (e+1))\big) \\ + \sum_{j=1}^{i-1}(-1)^j x_1^{\alpha_1}\cdots \:x_{i-j}^{\alpha_{i-j}+1}\cdots \:x_i^{e}\det(S_j).
\end{gather*}
Supposing that $x_1^{\alpha_1}\cdots\: x_i^{e}\det\big(M_{0,\dots,0,\alpha_i - e,\dots,\alpha_r}(\tau -e)\big) \in I_r\big(M(n,r)\big)$, by induction hypothesis, one concludes that $x_1^{\alpha_1}\cdots\: x_i^{e+1} \det\big(M_{0,\dots,0,\alpha_i - (e+1),\dots,\alpha_r}(\tau - (e+1))\big) \in I_r\big(M(n,r)\big)$. If we repeat this process until $x_1^{\alpha_1}\cdots\: x_r^{\alpha_r -1} \det\big(M_{0,\dots,0,1}(1)\big)$ and argue as above, we conclude that $x_1^{\alpha_1}\cdots\: x_r^{\alpha_r} \in I_r\big(M(n,r)\big)$.
\qed\\[0.5cm]
\noindent \textbf{Acknowledgment:} This question was proposed to us during the conference PRAGMATIC 2023 at Catania (Italy), where Luis Núñez-Betancourt and Eamon Quinlan-Gallego were part of the mentoring team. We let our gratitude to the PRAGMATIC 2023 organization, as well as Luis Núñez-Betancourt and Eamon Quinlan-Gallego for their valuable contributions with discussions and insightful comments. Additionally, we would like to thank CAPES for funding the transportation expenses of one of the team members.

\newpage
\bibliographystyle{alpha}
\bibliography{References} 
\end{document}